\theoremstyle{plain}
\newtheorem{theorem}{Theorem}[section]
\newtheorem{lemma}[theorem]{Lemma}
\newtheorem{cor}[theorem]{Corollary}
\newtheorem{utheorem}{\textrm{\textbf{Theorem}}}
\theoremstyle{definition}
\newtheorem{defn}[theorem]{Definition}
\newtheorem{remark}[theorem]{Remark}
\newtheorem{example}[theorem]{Example}
\newtheorem{question}[theorem]{Question}
\numberwithin{equation}{section}
\newcommand{\I}{\mathrm{I}}
\newcommand{\F}{\mathbb{F}}
\newcommand{\R}{\mathbb{R}}
\newcommand{\D}{\mathbb{D}}
\newcommand{\floo}[1]{\lfloor #1 \rfloor}
\renewcommand{\geq}{\geqslant}
\renewcommand{\leq}{\leqslant}
\begin{document}

\title[Cholesky decomposition for symmetric matrices over finite fields]{Cholesky decomposition for symmetric matrices\\ over finite fields}


\author[Prateek Kumar Vishwakarma]{Prateek Kumar Vishwakarma}
\address[]{D\'epartement de math\'ematiques et de
statistique, Universit\'e Laval, Qu\'ebec, Canada}
\email{\tt prateek-kumar.vishwakarma.1@ulaval.ca,
prateekv@alum.iisc.ac.in}

\begin{abstract}
Inspired by the seminal work of Andr\'e-Louis Cholesky -- whose contributions remain crucial in broader sciences even after more than a century -- Cooper, Hanna and Whitlatch (2024) developed a theory of positive matrices over finite fields, and Khare and Vishwakarma (2025) described a general Cholesky factorization for a dense sub-family of the cone of Hermitian matrices over real/complex fields, whose leading principal minors (LPM) are nonzero. Building on this, we develop a parallel theory within the finite field setting. Specifically \textit{(i)} we extend the general Cholesky factorization to the LPM cone over finite fields which has asymptotic density $1$. We show that \textit{(ii)}
this factorization is compatible with the entrywise Frobenius map, recently studied in the context of positivity preservers by Guillot, Gupta, Vishwakarma, and Yip [\textit{J.\ Algebra}, 2025]. We also \textit{(iii)} leverage the Cholesky-structures to define meaningful group operations on the matrix cone, and as an application \textit{(iv)} enumerate sub-cones of LPM matrices using our general Cholesky factorizations.
\end{abstract}

\subjclass[2020]{
15B48, 
15A23, 
15B33}

\keywords{Finite field, 
Cholesky decomposition,
Cholesky factorization,
sign pattern,
LPM matrix,
TPM matrix, Frobenius map}

\date{\today}

\maketitle


\section{Introduction and main results}

For an integer $n\geq 1$, over the real or complex field $\F$, a Hermitian matrix $A\in \F^{n\times n}$ is called positive definite if the quadratic form $z^*Az>0$ for all nonzero $z\in \F^{n}$. Among more than half a dozen equivalent definitions of these matrices, the one given by Andr\'e-Louis Cholesky's Factorization Theorem stands out for its applicability in the broader sciences and mathematics, even after one hundred years of its publication in 1924~\cite{Benoit}. While the applicability of Cholesky's factorization of the positive definite cone of matrices has extensively been explored for over several decades, there have been fewer attempts made to extend it to a broader class of matrices, and for fields that are not necessarily real or complex. We explore this extension over finite fields, and we begin by mentioning two such recent works \cite{cooper2024positive, khare2025cholesky} that inspired the current article in such an algebraic setting.

In a recent work by Cooper, Hanna, and Whitlatch \cite{cooper2024positive}, the cone of positive definite matrices over a finite field $\F_q$ (with $q$ elements) is introduced. The authors define a field element to be positive if it is a nonzero square in the field. Then, a symmetric matrix with entries in $\F_q$ is called positive definite if all its leading principal minors are positive -- that is, they are squares in the field. The authors show that such matrices admit a Cholesky-type factorization:

\begin{theorem}[\cite{cooper2024positive}]\label{T:CHW}
Only when $q$ is even or $q \equiv 3 \pmod{4}$, a symmetric matrix $A$ with entries in $\F_q$ is positive definite if and only if there exists a unique lower triangular matrix $L$ with positive diagonal entries such that $A = LL^T$.    
\end{theorem}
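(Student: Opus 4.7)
My plan is to adapt the classical Cholesky proof over $\R$, inducting on $n$ and treating sufficiency, existence, and uniqueness separately. The hypothesis on $q$ will enter in two distinct places -- existence of a positive square root in the base case, and exclusion of a sign ambiguity in the uniqueness step -- and isolating these is the heart of the argument.

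For the sufficiency direction, which requires no hypothesis on $q$: if $A = LL^T$ with $L$ lower triangular and each $\ell_{ii}$ a nonzero square, then $A_k = L_k L_k^T$ gives $\det(A_k) = \det(L_k)^2 = \bigl(\prod_{i=1}^{k}\ell_{ii}\bigr)^2$, a square of a nonzero element and hence a nonzero square, so $A$ is positive definite in the CHW sense.

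For existence, I would induct on $n$. The base case $n = 1$ asks: given a nonzero square $a \in \F_q$, does there exist a nonzero square $\ell$ with $\ell^2 = a$? In characteristic $2$ the Frobenius $x \mapsto x^2$ is bijective on $\F_q$, so every element is a unique square. When $q \equiv 3 \pmod{4}$, $(q-1)/2$ is odd and so $(-1)^{(q-1)/2} = -1$, meaning the two square roots $\pm \ell$ of $a$ carry opposite quadratic characters -- exactly one is a nonzero square. (When $q \equiv 1 \pmod{4}$ both roots share the same character, and any nonzero square which is not a fourth power yields a positive definite $1 \times 1$ matrix with no positive Cholesky factor, explaining the restriction on $q$.) For the inductive step I would write
$$A = \begin{pmatrix} a_{11} & v^T \\ v & A' \end{pmatrix},$$
choose $\ell_{11}$ a positive square root of $a_{11}$, and form the Schur complement $S = A' - vv^T/a_{11}$. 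The identity $\det(A_k) = a_{11}\,\det(S_{k-1})$ makes each leading principal minor of $S$ a ratio of nonzero squares, hence a nonzero square, so $S$ is positive definite and the inductive hypothesis yields $S = \widetilde{L}\widetilde{L}^T$ with $\widetilde{L}$ lower triangular and positive diagonal. Assembling
$$A = \begin{pmatrix} \ell_{11} & 0 \\ v/\ell_{11} & \widetilde{L} \end{pmatrix} \begin{pmatrix} \ell_{11} & v^T/\ell_{11} \\ 0 & \widetilde{L}^T \end{pmatrix}$$
delivers the desired Cholesky factorization of $A$.

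For uniqueness, if $A = L_1 L_1^T = L_2 L_2^T$ with both factors having positive diagonal, then $L_2^{-1} L_1 = L_2^T L_1^{-T}$ is simultaneously lower and upper triangular, hence a diagonal matrix $D$; combining $L_1 = L_2 D$ with the two factorizations forces $D^2 = I$, so each $d_i = \pm 1$. Since $d_i$ is a ratio of positive diagonal entries, it is itself a nonzero square; because $-1$ is not a nonzero square when $q \equiv 3 \pmod{4}$ and equals $1$ in characteristic $2$, we conclude $d_i = 1$, so $L_1 = L_2$. (When $q \equiv 1 \pmod{4}$, $-1$ is a square and uniqueness already fails in dimension $1$, reinforcing the ``only when'' phrasing.) The main obstacle I anticipate is tying both uses of the hypothesis on $q$ cleanly to its residue class modulo $4$ and demonstrating that existence and uniqueness genuinely fail when $q \equiv 1 \pmod{4}$, to justify the ``only when''; the inductive step itself is a faithful transcription of the classical real-variable argument.
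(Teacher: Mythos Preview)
Your proof is correct. The paper does not prove Theorem~\ref{T:CHW} directly---it is quoted from \cite{cooper2024positive}---but it does prove the generalization Theorem~\ref{main-thm-1}, whose specialization to $A_\epsilon = \I_n$ and $\epsilon = (1,\dots,1)$ recovers Theorem~\ref{T:CHW} (see Remark~\ref{Rem:two-fold}). Your argument and the paper's proof of Theorem~\ref{main-thm-1}(2) share the same skeleton (induction via a Schur complement, with the hypothesis on $q$ entering through the well-definedness of a positive square root), but differ in three cosmetic respects: you peel off the \emph{first} row and column whereas the paper peels off the \emph{last}; you prove uniqueness by the classical $L_2^{-1}L_1 = L_2^T L_1^{-T}$ diagonal-matrix trick, whereas the paper obtains uniqueness constructively from the algorithm itself (the positive square root in~\eqref{Eqsquare} is unique); and you supply explicit $1\times 1$ counterexamples for the $q \equiv 1 \pmod 4$ case justifying the ``only when'', which the paper defers to the cited reference and to Remark~\ref{Rem:enumT-via-choleskyT}. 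None of these differences is substantive---both routes are faithful transcriptions of the classical real-variable Cholesky argument.
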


While this result establishes a compelling analogue of the Cholesky factorization in the finite field setting, other characterizations of positive definiteness familiar from the real or complex cases do not yet appear to have meaningful analogues over finite fields. Nonetheless, the existence of a Cholesky factorization in the finite field context is leveraged in \cite{cooper2024positive} to demonstrate the existence of certain pressing sequences for weighted graphs. Furthermore, this work has served as a key motivation for developing a theory of positivity-preserving entrywise transformations over finite fields \cite{guillot2024positivity,guillot2024positivity-fpsac}, which we will discuss in more detail in a subsection later.

The second relevant work is more recent and is by Khare and Vishwakarma \cite{khare2025cholesky}, who introduce a general Cholesky factorization and develop a rich theory for a dense open set of real/complex Hermitian matrices. Their framework decomposes the space into cones of matrices determined by the sign patterns of their leading principal minors. Additionally, the framework explores several implications of this decomposition.\medskip

We adopt an idea from \cite{khare2025cholesky} and explore the following points (and others discussed later):

\begin{enumerate}[$(a)$]
    \item We extend Theorem~\ref{T:CHW}, which addresses positive definite matrices via their Cholesky decomposition, to broader cones of symmetric matrices over a finite field. 
    \item The general Cholesky decomposition discussed in \cite{khare2025cholesky} is done for fields (including the real-closed fields) in which the underlying/base field is totally ordered. In the current work over finite fields, the Cholesky decomposition is discussed over fields that do not have a total order, thus demonstrating such factorizations in a different algebraic setting.
    \item We explore implications that arise specifically in the finite field context, particularly in light of recent developments regarding entrywise transforms in this direction in \cite{guillot2024positivity}. 
\end{enumerate}

Let us now introduce the new matrix cones over a finite field.

\subsection{Notations} Throughout this article, we let $\F_q$ denote the finite field with $q = p^k$ elements, where $p$ is a prime and $k \geq 1$ an integer. The multiplicative cyclic group of nonzero elements of $\F_q$ is denoted by $\F_q^\times$. An element \( a \in \F_q^\times \) is called \emph{positive} if there exists \( b \in \F_q^{\times} \) such that \( a = b^2 \). We denote the set of all positive elements by \( \F_q^+ \), and define the set of \emph{negative} elements as \( \F_q^- := \F_q^\times \setminus \F_q^+ \). We distinguish these elements using the \emph{quadratic/sign character}
\begin{align*}
 \chi:\F_q^\times \to \{\pm 1\}  \quad \mbox{defined by}\quad \chi(a) :=
\begin{cases}
\phantom{-}1 & \text{if } a \in \F_q^+, \\
-1 & \text{if } a \in \F_q^-,
\end{cases}
\end{align*}
extended multiplicatively to all of \( \F_q \) by setting \( \chi(0) := 0 \). It is well known that if \( q \) is odd, then \( -1 \in \F_q^{-} \) if and only if \( q \equiv 3 \pmod{4} \) if and only if $\F_q^{-}=-\F_q^+$ {\cite[Proposition~2.3]{guillot2024positivity}}. In this case, the field \( \F_q \) is referred to as \emph{definite}; otherwise, when \( q \equiv 1 \pmod{4} \), it is called \emph{non-definite}. Finally, we use $\I_n$ to denote the $n\times n$ identity matrix over the field in context, and $A^{-T}=(A^{-1})^T$ for invertible matrices.

\begin{defn}[Matrix cones with sign patterns]\label{defn:LPM} Given an integer \( n \geq 1 \) and a sign pattern \( \epsilon \in \{\pm1\}^n\subseteq \R^{n} \), we define \( LPM_n^{\F_q}(\epsilon) \) to be the cone of symmetric matrices \( A \in \F_q^{n \times n} \) such that, for each \( k = 1, \dots, n \), the \( k \times k \) leading principal minor of \( A \) has quadratic character \( \epsilon_k \).

Similarly define \( TPM_n^{\F_q}(\epsilon) \) as the cone of symmetric matrices \( A \in \F_q^{n \times n} \) with all its $k \times k$ \textit{trailing} principal minors having quadratic characters \( \epsilon_k \).
\end{defn}

\noindent Each $LPM_n^{\F_q}(\epsilon)$ is nonempty, and so is $TPM_n^{\F_q}(\epsilon)$: define $\nabla,{\mathbb D}_\epsilon(\omega_{\pm})\in \F_q^{n\times n}$ via,
\begin{equation}\label{Elpmdiag}
{\mathbb D}_\epsilon(\omega_{\pm}) := 
\begin{pmatrix}
\omega_1 & 0 & 0 & \cdots & 0\\
0 & \omega_1 \omega_2 & 0 & \cdots & 0\\
0 & 0 & \omega_2 \omega_3 & \cdots & 0\\
\vdots & \vdots & \vdots & \ddots & \vdots\\
0 & 0 & 0 & \cdots & \omega_{n-1} \omega_n
\end{pmatrix} \quad \mbox{and}\quad \nabla:= 
\begin{pmatrix}
0 & 0  & \cdots & 1\\
\vdots & \vdots & \ddots & \vdots\\
0 & 1  & \cdots & 0\\
1 & 0  & \cdots & 0
\end{pmatrix},
\end{equation}
where each $\omega_k\in \{\omega_+,\omega_-\}$ for some fixed $\omega_{+}\in \F_q^{+}$ and $\omega_-\in \F_q^{-}$ such that $\chi(\omega_k)=\epsilon_k$. Then 
\begin{align}
\D_{\epsilon}(\omega_{\pm})\in LPM_n^{\F_q}(\epsilon) \quad \mbox{and}\quad    \nabla \cdot \D_{\epsilon}(\omega_{\pm}) \cdot \nabla\in TPM_n^{\F_q}(\epsilon).
\end{align}
The cones $LPM_n^{\F_q}$ and $TPM_n^{\F_q}$, defined as the sets of all symmetric matrices with nonzero leading, and respectively trailing, principal minors, decompose into disjoint unions:
\begin{align}
LPM_n^{\F_q}=\bigsqcup_{\epsilon\in \{\pm 1\}^{n}} LPM_n^{\F_q}(\epsilon) \quad \mbox{and}\quad TPM_n^{\F_q}=\bigsqcup_{\epsilon\in \{\pm 1\}^{n}} TPM_n^{\F_q}(\epsilon),    
\end{align}
provided \( q \) is odd.
And when $q$ is even, each $LPM_n^{\F_q}(\epsilon)=LPM_n^{\F_q}$ and $TPM_n^{\F_q}(\epsilon)=TPM_n^{\F_q}$.

\subsection{Cholesky decomposition and enumerations}

Our first main result addresses the Cholesky decompositions for the LPM and TPM matrix cones over a finite field.

\begin{utheorem}[Cholesky decomposition over finite fields]\label{main-thm-1}
Fix an integer $n \geq 1$ and a finite field $\F_q$. Suppose a sign pattern $\epsilon \in \{ \pm 1 \}^n\subseteq \R^n$ and a matrix
$A_\epsilon \in LPM_n^{\F_q}(\epsilon)$ are given. Then:

\begin{enumerate}
    \item For each invertible lower triangular $L\in \F_q^{n\times n}$, the matrix $LA_{\epsilon}L^T\in LPM_n^{\F_q}(\epsilon)$. 
    \item In particular, if $\F_q$ is a definite field or has characteristic 2, then for each $A\in LPM_n^{\F_q}(\epsilon)$, there exists a unique lower triangular matrix $L\in \F_q^{n\times n}$ with positive diagonal entries such that $A=LA_{\epsilon}L^T$. 
\end{enumerate}
Moreover, the following linear and nonlinear transforms are bijections:
\begin{align*}
LPM_n^{\F_q}(\epsilon)\to TPM_n^{\F_q}(\epsilon')\quad \mbox{defined by}\quad   A \mapsto \nabla A \nabla, \quad \mbox{and} \quad A\mapsto A^{-1},
\end{align*}
where $\epsilon = \epsilon'$ in the linear case, and $\epsilon'=(\epsilon_n\epsilon_{n-1},\epsilon_n\epsilon_{n-2},\dots,\epsilon_n\epsilon_{1},\epsilon_n)$ in the nonlinear case. 

Therefore, the analogues of (1) and (2) hold for each fixed $A^{\epsilon}\in TPM_{n}^{\F_q}(\epsilon)$: for each invertible upper triangular $U\in \F_q^{n\times n}$, the matrix $UA^{\epsilon}U^T\in TPM_n^{\F_q}(\epsilon)$; and if $\F_q$ is a definite field or has characteristic $2$, then for each $A\in TPM_n^{\F_q}(\epsilon)$, there exists a unique upper triangular matrix $U\in \F_q^{n\times n}$ with positive diagonal entries such that $A=UA^{\epsilon}U^T$.
\end{utheorem}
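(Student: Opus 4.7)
The proof splits naturally into four tasks: (i) stability of $LPM_n^{\F_q}(\epsilon)$ under lower-triangular congruence (part (1)); (ii) existence and uniqueness of the factor $L$ when $\F_q$ is definite or of characteristic $2$ (part (2)); (iii) the two bijections $A \mapsto \nabla A \nabla$ and $A \mapsto A^{-1}$; and (iv) the TPM analogues. Parts (i) and (iii) are direct computations; the technical heart is (ii), which I would prove by induction on $n$ combined with a Schur-complement reduction.

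For (i), because $L$ is lower triangular, each $k \times k$ leading submatrix of $LA_\epsilon L^T$ factors as $L_k (A_\epsilon)_k L_k^T$, so its determinant equals $\det(L_k)^2 \det((A_\epsilon)_k)$. The factor $\det(L_k)^2$ is a nonzero square and hence has trivial quadratic character, so the character of the $k$-th leading principal minor of $LA_\epsilon L^T$ remains $\epsilon_k$. For the map $A \mapsto \nabla A \nabla$, the involution property $\nabla^2 = \I_n$ gives injectivity, and an elementary index calculation shows the $k$-th trailing principal minor of $\nabla A \nabla$ equals the $k$-th leading principal minor of $A$ (the row- and column-reversal signs $\det(P)$ cancel as $\det(P)^2 = 1$), so sign patterns are preserved and the image is $TPM_n^{\F_q}(\epsilon)$. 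For $A \mapsto A^{-1}$, I would invoke Jacobi's complementary-minor identity for the principal index set $I = \{n-k+1, \dots, n\}$, giving $\det(A^{-1}[I,I]) = \det(A[I^c, I^c])/\det(A)$ (the Jacobi sign $(-1)^{\sum I + \sum J}$ is trivial when $I = J$), whose quadratic character is $\epsilon_{n-k} \epsilon_n$ with the convention $\epsilon_0 := 1$, matching the stated $\epsilon'$.

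For (ii), I would induct on $n$, writing
\[
A = \begin{pmatrix} A_{n-1} & v \\ v^T & a_{nn} \end{pmatrix}, \qquad A_\epsilon = \begin{pmatrix} (A_\epsilon)_{n-1} & u \\ u^T & b_{nn} \end{pmatrix},
\]
and seeking $L = \begin{pmatrix} L_{n-1} & 0 \\ w^T & \ell \end{pmatrix}$ with $\ell \in \F_q^+$. The inductive hypothesis applied to $A_{n-1}$ and $(A_\epsilon)_{n-1}$ supplies $L_{n-1}$ uniquely. The off-diagonal block equation forces $w = (A_\epsilon)_{n-1}^{-1}(L_{n-1}^{-1} v - \ell u)$, so $w$ is determined once $\ell$ is. Substituting into the $(n,n)$-entry equation and simplifying via the Schur-complement identities for $\det(A)$ and $\det(A_\epsilon)$ (equivalently, by comparing the determinants of both sides of $A = LA_\epsilon L^T$ at sizes $n-1$ and $n$), the constraint reduces to the single scalar relation
\[
\ell^2 = \frac{\det(A)\,\det((A_\epsilon)_{n-1})}{\det(A_{n-1})\,\det(A_\epsilon)}.
\]
The right-hand side has character $\epsilon_n^2 \epsilon_{n-1}^2 = 1$, so it is a nonzero square. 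In characteristic $2$ the Frobenius $x \mapsto x^2$ is a bijection on $\F_q$, and in an odd definite field the two square roots $\pm\ell$ lie on opposite sides of $\chi$ (since $-1 \in \F_q^-$); hence a unique positive $\ell$ exists in both regimes, and $L$ is uniquely determined with positive diagonal.

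The TPM analogues then follow by transport along the $\nabla$-bijection: conjugation by $\nabla$ sends a lower-triangular matrix to an upper-triangular one with the same multiset of diagonal entries, so applying (1) and (2) to $\nabla B \nabla \in LPM_n^{\F_q}(\epsilon)$ and conjugating the factor back yields the asserted factorization of $B \in TPM_n^{\F_q}(\epsilon)$. The main delicacy I foresee is handling characteristic $2$ in part (ii): the cross-term $\ell(u^T w + w^T u) = 2\ell u^T w$ vanishes, so one cannot complete the square in the classical way; however, since $-x = x$ in this characteristic the Schur-complement identities remain intact, and the reduction to the displayed equation for $\ell^2$ goes through unchanged. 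A subtler bookkeeping point is the vanishing of the Jacobi sign on principal index sets, which is exactly what lets $A \mapsto A^{-1}$ permute the sign-pattern cones cleanly.
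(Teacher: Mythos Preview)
Your proposal is correct and follows essentially the same route as the paper: the same inductive block-decomposition with Schur-complement reduction for part~(2), arriving at the identical scalar equation $\ell^2 = \dfrac{\det A \cdot \det (A_\epsilon)_{n-1}}{\det A_{n-1} \cdot \det A_\epsilon}$ and resolving it via the unique positive square root; Jacobi's complementary-minor identity on principal index sets for the inverse map; and transport along the $\nabla$-involution for the TPM analogues. The only cosmetic difference is in part~(1), where you read off $(LA_\epsilon L^T)_{[k]\times[k]} = L_k (A_\epsilon)_k L_k^T$ directly from the block-triangular shape of $L$, while the paper reaches the same identity via the Cauchy--Binet formula.
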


A couple of remarks are in order:

\begin{remark}[Two-fold refinement in Theorem~\ref{main-thm-1}]\label{Rem:two-fold}
Theorem~\ref{main-thm-1} establishes that for a definite field $\F_q$, the map \( L \mapsto L A_{\epsilon} L^T \) is a bijection from the set of lower triangular matrices with positive diagonal entries to the cone \( {LPM}_n^{\F_q}(\epsilon) \). As such, it constitutes a twofold extension of Theorem~\ref{T:CHW}:
\begin{itemize}
    \item[(a)] it applies to \emph{every} sign pattern \( \epsilon \in \{ \pm 1 \}^n \), including the all-ones vector \( \epsilon = (1,1,\dots,1) \), which corresponds to the positive definite cone considered in Theorem~\ref{T:CHW}; and
    \item[(b)] for each cone \({LPM}_n^{\F_q}(\epsilon) \), including the positive definite cone, it provides \( {LPM}_n^{\F_q}(\epsilon) \)-many Cholesky-type factorizations, one for \emph{each} matrix \( A_{\epsilon} \). For instance, the ``classical'' Cholesky decomposition in Theorem~\ref{T:CHW} arises as the special case when \( A_{\epsilon} = \I_n \), but Theorem~\ref{main-thm-1} allows one to consider distinct Cholesky decompositions for arbitrary choices of positive definite \( A_{(1,\dots,1)} \).
\end{itemize}

Note, both of these extensions apply to a definite field $\F_q$, and exactly one applies in the case where $q$ is even. However, they do not apply to the remaining finite fields (of non-definite type), as such fields do not admit Cholesky factorizations, pointed out in the next remark.
\end{remark}

\begin{remark}[Theorem~\ref{main-thm-1} for non-definite fields]\label{Rem:enumT-via-choleskyT}

Theorem~\ref{main-thm-1}(2) fails to hold over non-definite fields $\F_q$, as every matrix of the form \( L\D_{\epsilon}(\omega_{\pm})L^T \) coincides with \( (LD)\D_{\epsilon}(\omega_{\pm})(LD)^T \) for all diagonal matrices \( D \) with diagonal entries in \( \{ \pm1 \} \subset \F_q \), where \( \D_{\epsilon}(\omega_{\pm}) \) is defined as in~\eqref{Elpmdiag}. Consequently, the uniqueness of the corresponding factorization also breaks down for the TPM cones over non-definite fields.
\end{remark}

As an immediate corollary of Theorem~\ref{main-thm-1}, we provide enumerations of the LPM and TPM (sub-)cones.

\begin{cor}[Enumerations]\label{T:enum}
Fix an integer $n \geq 1$, and let $\F_q$ be a finite field with $q$ odd. Then we have
\[
\# TPM_{n}^{\F_q}=\# LPM_{n}^{\F_q} = 2^n \# LPM_{n}^{\F_q}(\epsilon)=2^n \# TPM_{n}^{\F_q}(\epsilon) =  {(q-1)^n}q^{\binom{n}{2}}
\]
for all sign patterns $\epsilon \in \{\pm 1\}^{n}$, where $q^{\binom{1}{2}}:=1$. Alternately, if $q$ is even, then $$ \# TPM_{n}^{\F_q}= \#LPM_{n}^{\F_q} =  \# LPM_{n}^{\F_q}(\epsilon)= \# TPM_{n}^{\F_q}(\epsilon) =  {(q-1)^n}q^{\binom{n}{2}}.$$
\end{cor}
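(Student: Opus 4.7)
The plan is to induct on $n$ via the Schur complement, counting both the full cone $LPM_n^{\F_q}$ and each sub-cone $LPM_n^{\F_q}(\epsilon)$ uniformly, then transfer everything to the TPM side using the bijection $A\mapsto \nabla A \nabla$ from Theorem~\ref{main-thm-1}. The base case $n=1$ is immediate: $LPM_1^{\F_q}=\F_q^\times$ has $q-1$ elements, and when $q$ is odd each sub-cone $LPM_1^{\F_q}(\epsilon_1)$ has $(q-1)/2$ elements by definition of $\F_q^{\pm}$.

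For the inductive step, decompose any $A\in LPM_n^{\F_q}$ in block form
\begin{equation*}
A = \begin{pmatrix} A_{n-1} & v \\ v^T & a \end{pmatrix}, \qquad v\in \F_q^{n-1},\ a\in \F_q,
\end{equation*}
so that the first $n-1$ leading principal minors of $A$ coincide with those of $A_{n-1}$, forcing $A_{n-1}\in LPM_{n-1}^{\F_q}$. The Schur complement identity $\det A = \det A_{n-1}\cdot (a - v^T A_{n-1}^{-1} v)$ shows that $A\in LPM_n^{\F_q}$ iff $a\neq v^T A_{n-1}^{-1} v$, giving $q-1$ admissible values of $a$ per pair $(A_{n-1},v)$. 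Hence $\# LPM_n^{\F_q} = (q-1)\,q^{n-1}\cdot \#LPM_{n-1}^{\F_q}$, which unfolds to $(q-1)^n q^{\binom{n}{2}}$.

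When $q$ is odd, the same decomposition refines the sub-cones: given $A_{n-1}\in LPM_{n-1}^{\F_q}(\epsilon_1,\ldots,\epsilon_{n-1})$, the requirement $\chi(\det A) = \epsilon_n$ is equivalent to $\chi(a-v^T A_{n-1}^{-1} v) = \epsilon_{n-1}\epsilon_n$, which is satisfied by exactly $(q-1)/2$ values of $a$. Induction then gives $\#LPM_n^{\F_q}(\epsilon) = ((q-1)/2)^n q^{\binom{n}{2}}$, uniformly in $\epsilon$, and summing over the $2^n$ sign patterns recovers the full count. When $q$ is even, $\F_q^\times = \F_q^+$ collapses the sign-pattern decomposition, so $LPM_n^{\F_q}(\epsilon)=LPM_n^{\F_q}$. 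In both regimes the TPM equalities follow by invoking the bijection $A\mapsto \nabla A \nabla$ from Theorem~\ref{main-thm-1}, which matches $LPM_n^{\F_q}(\epsilon)$ with $TPM_n^{\F_q}(\epsilon)$ term-by-term.

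No serious obstacle arises; the mild subtlety worth flagging is that for non-definite $\F_q$ the Cholesky uniqueness in Theorem~\ref{main-thm-1}(2) fails (Remark~\ref{Rem:enumT-via-choleskyT}), so one cannot enumerate $LPM_n^{\F_q}(\epsilon)$ by simply counting lower triangular matrices with positive diagonal in that setting. The Schur-complement induction bypasses this and treats even/odd $q$, definite/non-definite, and the LPM/TPM sides in one stroke.
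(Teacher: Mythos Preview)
Your argument is correct. The Schur-complement recursion is exactly the right tool, and your handling of the sign condition $\chi(a-v^TA_{n-1}^{-1}v)=\epsilon_{n-1}\epsilon_n$ is clean and accurate.

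The paper proceeds slightly differently: for definite fields and characteristic~$2$ it invokes the Cholesky bijection of Theorem~\ref{main-thm-1}(2), so that $\#LPM_n^{\F_q}(\epsilon)$ equals the number of lower triangular matrices with positive diagonal, namely $\bigl(\tfrac{q-1}{2}\bigr)^n q^{\binom{n}{2}}$ (respectively $(q-1)^n q^{\binom{n}{2}}$ when $q$ is even). Only for the non-definite case---where, as you correctly flag, Cholesky uniqueness fails---does the paper fall back on an inductive argument, phrased as the bijectivity of $x\mapsto\det\begin{pmatrix}A&\mathbf{u}\\ \mathbf{u}^T&x\end{pmatrix}$, which is your Schur-complement step in disguise. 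So you and the paper agree on the non-definite branch; your contribution is to run that same induction uniformly across all field types rather than splitting into cases. This buys you a single, self-contained argument independent of Theorem~\ref{main-thm-1}(2), at the cost of not exhibiting the enumeration as a corollary of the Cholesky machinery the paper is built to advertise.
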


While this enumerative result can be established directly through inductive arguments, a more elegant and often preferred approach in enumerative combinatorics involves constructing a bijection with a well-structured set for which the counting problem is more tractable (a perspective beautifully illustrated by the numerous celebrated interpretations of the Catalan numbers catalogued by Richard Stanley \cite{stanley}). In the case at hand, rather than pursuing a purely combinatorial classification, we draw upon the structural insights afforded by our general Cholesky factorizations in a finite field setting, which provide a natural (and powerful!) framework for approaching the enumerations.

\begin{proof}[Proof of Corollary~\ref{T:enum}]
For definite fields and for fields with characteristic $2$: one needs to count the number of lower triangular matrices with positive diagonal entries. For the remaining non-definite fields: use induction and the fact that
\begin{align*}
x\mapsto \det \begin{pmatrix}A & {\bf u} \\ {\bf u}^T & x
\end{pmatrix}_{n\times n}\quad \mbox{is a bijection over $\F_q$},
\end{align*}
for a given vector ${\bf u}\in \F^{n-1}$ and nonsingular $A\in \F^{(n-1)\times (n-1)}$.
\end{proof}

\begin{remark}[A disclaimer on the asymptotic density]
Let $\mathrm{Sym}_{n}^{\F_q}$ denotes the set of all $n\times n$ symmetric matrices over a finite field $\F_q$. For a fixed $n\geq 1$, the proportion of matrices in $\mathrm{Sym}_{n}^{\F_q}$ that lie in the cone $LPM_n^{\F_q}$ (and similarly $TPM_n^{\F_q}$) tends to $1$ as $q \to \infty$; that is,
\begin{align*}
    \lim_{q\to \infty} \frac{\# LPM_n^{\F_q}}{\# \mathrm{Sym}_{n}^{\F_q}} = 1.
\end{align*}
It may be worth reiterating that the asymptotic density in question pertains to the setting where the matrix size $n$ is fixed, and the field size $q=p^k$ grows without bound (e.g., the integer $k\to \infty$ if we fix $p$).
\end{remark}

\subsection{The compatibility of Cholesky and Frobenius via Entrywise Transforms}\label{subsec:entrywise}
As we saw in Theorem~\ref{main-thm-1}(1) and (2), that the following map is a bijection for a definite field $\F_q$:
\begin{align*}
\Psi_{A_{\epsilon}}: {\bf L}_{n}^{\F_q^{+}} \to LPM_{n}^{\F_q}(\epsilon) \quad \text{defined by} \quad L \mapsto LA_{\epsilon}L^T,
\end{align*}
for any fixed $A_\epsilon \in LPM_{n}^{\F_q}(\epsilon)$, where ${\bf L}_{n}^{\F_q^{+}}$ denotes the cone of lower triangular matrices with positive diagonal entries in $\F_q$. Consequently, we obtain the bijective composition:
\begin{align*}
\Psi_{A_{\epsilon'} \to A_{\epsilon}}:=\Psi_{A_{\epsilon}} \circ \Psi_{A_{\epsilon'}}^{-1}: LPM_{n}^{\F_q}(\epsilon') \to {\bf L}_{n}^{\F_q^{+}} \to LPM_{n}^{\F_q}(\epsilon)\quad \mbox{given by}\quad LA_{\epsilon'}L^T \mapsto LA_{\epsilon}L^T.
\end{align*}
This provides a Cholesky-based mechanism for transitioning between the cones $LPM_{n}^{\F_q}(\epsilon')$ and $LPM_{n}^{\F_q}(\epsilon)$. Moreover, it prompts the following question: do there exist other maps -- perhaps unrelated to Cholesky factorization and over any given finite field -- that facilitate such transitions between the LPM cones? We address this question for a special class of transformations whose study dates back over a century to Schur -- the student of Frobenius -- including recent developments in the finite field setting.

\begin{defn}[Entrywise Transforms]
Suppose $\F_q$ is a finite field, and consider a map $f:\F_q\to \F_q$. Then this map has a natural extension to all of the matrix space $\F_q^{n\times n}$, for any integer $n\geq 1$ by considering:
\begin{align*}
f[-]:\F_q^{n\times n} \to \F_q^{n\times n}\quad \mbox{defined by}\quad f[A]:=(f(a_{ij}))_{i,j=1}^{n},
\end{align*}
for all $A=(a_{ij})_{i,j=1}^{n}\in \F_q^{n\times n}$. These matrix functions are referred to as Entrywise Transforms.    
\end{defn}

These transforms over finite fields were recently studied by Guillot, Gupta, Vishwakarma, and Yip~\cite{guillot2024positivity-fpsac,guillot2024positivity}, who extended celebrated classical results of Schoenberg~\cite{schoenberg1942positive} and Rudin~\cite{rudin1959positive} to the algebraic framework of finite fields. The foundational work of Schoenberg and Rudin itself traces back to a product theorem \cite{schur1911} of Schur, and to a seminal observation by Pólya and Szegő~\cite{polya-szego}. For a comprehensive account of recent advances in the theory of entrywise transforms, see the monograph by Khare~\cite{khare-book}.

We now bring entrywise transforms to the present setting. We will show that our Cholesky decomposition is compatible with applying the Frobenius entrywise. Namely for any sign pattern $\epsilon\in \{\pm 1\}^{n}\subseteq \R^{n}$, and for special choices of $A_{\epsilon}=\D_{\epsilon}:=\D_{\epsilon}({\pm}1)$ for $\omega_{\pm}=\pm 1$ in \eqref{Elpmdiag},
\begin{equation}
\Psi_{\D_{\epsilon}}^{-1}(\mathrm{Frob}_p[A]) = \mathrm{Frob}_p[\Psi_{\D_{\epsilon}}^{-1}(A)]    \quad \mbox{for all}\quad A\in LPM_{n}^{\F_q}(\epsilon).
\end{equation}

This compatibility question can be pursued within a natural framework. Recall that the map $\Psi_{A_{\epsilon}'\to A_{\epsilon}}$ provides a bijection between the cones $LPM_{n}^{\F_q}(\epsilon')$ and $LPM_{n}^{\F_q}(\epsilon)$. While this map is algorithmic, it is natural to ask whether a simpler, less nuanced map can be obtained in this context. To explore this, we consider entrywise transforms. The following theorem characterizes such entrywise maps over a finite definite field, subject to a certain $2$-$by$-$2$ sign pattern constraint.

\begin{utheorem}[Entrywise transforms and their compatibility with Frobenius]\label{T:entrywise-definite}
Suppose that integers $2\leq s\leq n$ are given, and let $\F_q$ be a finite definite field with $q=p^k$, where $p$ is prime. Let $\epsilon,\epsilon'\in \{\pm 1\}\subseteq \R^{n\times n}$ be sign patterns satisfying $\epsilon_1=\dots=\epsilon_s=\epsilon_1'=\dots=\epsilon_s'=1$. Then for a given function $f:\F_q\to \F_q$, the following are equivalent:
\begin{enumerate}
    \item The entrywise transform $f[-]$ sends $LPM_{n}^{\F_q}(\epsilon')$ into $LPM_{n}^{\F_q}(\epsilon)$.
    \item The sign patterns are equal, and $f$ is a positive multiple of a power of the Frobenius:
    \[
    \epsilon'=\epsilon \qquad \mbox{and}\qquad 
    f\equiv c\cdot \mathrm{Frob}_p^{\ell} \quad \mbox{for some}\quad 0\leq \ell\leq k-1 \mbox{ and }c\in \F_q^+.
    \]
\end{enumerate}
Moreover, every such map $f[-]\equiv c\cdot \mathrm{Frob}_p^{\ell}[-]$ is compatible with Cholesky maps $\Psi_{A_{\epsilon}}^{\pm 1}$ in the following sense: for all $A\in LPM_{n}^{\F_q}(\epsilon)$, $L\in {\bf L}_{n}^{\F_q^+}$ and $0\leq \ell\leq k-1$, we have
\begin{align*}
&\Psi_{A_{\epsilon}}^{-1}(c\cdot \mathrm{Frob}_{p}^{\ell}[A]) = \mathrm{Frob}_p^{\ell}[\Psi_{c\cdot \mathrm{Frob}_p^{\ell}[A_{\epsilon}]}^{-1}(A)] = \sqrt{c}\cdot \mathrm{Frob}_p^{\ell}[\Psi_{\mathrm{Frob}_p^{\ell}[A_{\epsilon}]}^{-1}(A)]\\
\mbox{and}\qquad 
&c\cdot \mathrm{Frob}_p^{\ell}[\Psi_{A_{\epsilon}}(L)] = \Psi_{c\cdot \mathrm{Frob}_p^{\ell}[A_{\epsilon}]}(\mathrm{Frob}_p^{\ell}[L]) = \Psi_{\mathrm{Frob}_p^{\ell}[A_{\epsilon}]}(\sqrt{c}\cdot \mathrm{Frob}_p^{\ell}[L]),
\end{align*}
where $\sqrt{c}$ denotes the unique $d\in \F_q^+$ such that $d^2=c$. 

In particular if $c=1$ and the chosen $A_{\epsilon}=\D_{\epsilon}:=\D_{\epsilon}({\pm}1)$ for $\omega_{\pm}=\pm 1$ as in \eqref{Elpmdiag}, then
\[
\Psi_{\D_{\epsilon}}^{\pm1} \circ \mathrm{Frob}_p = \mathrm{Frob}_p\circ \Psi_{\D_{\epsilon}}^{\pm 1} \qquad \mbox{i.e.,}\qquad 
\Psi_{\D_{\epsilon}}^{\pm 1}(\mathrm{Frob}_p[M_{\pm}]) = \mathrm{Frob}_p[\Psi_{\D_{\epsilon}}^{\pm 1}(M_{\pm})],
\]
for all $M_-\in LPM_{n}^{\F_q}(\epsilon)$ and all $M_{+}\in {\bf L}_{n}^{\F_q^+}$.
\end{utheorem}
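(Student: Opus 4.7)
The plan is to prove the equivalence $(1)\Leftrightarrow(2)$ in two halves, then verify the compatibility ``moreover'' assertions as direct algebraic consequences of Frobenius being a ring homomorphism.

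For $(2)\Rightarrow(1)$, the argument hinges on two elementary facts about $f(x)=c\cdot x^{p^\ell}$. First, $\mathrm{Frob}_p^\ell$ is a ring automorphism of $\F_q$ and every $k\times k$ leading principal minor is a polynomial in the matrix entries, so the $k$-th leading principal minor of $f[A]$ equals $c^k$ times $\mathrm{Frob}_p^\ell$ applied to the $k$-th leading principal minor of $A$. Second, both Frobenius and multiplication by $c\in\F_q^+$ preserve the quadratic character $\chi$: Frobenius sends a square $b^2$ to the square $(b^{p^\ell})^2$, and $\chi(c^k)=\chi(c)^k=+1$. Thus the sign pattern of $f[A]$ equals that of $A$, forcing $\epsilon'=\epsilon$ and yielding the containment.

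For $(1)\Rightarrow(2)$, the strategy is to reduce to the Schoenberg--Rudin-type classification, over $\F_q$, of entrywise preservers of the $s\times s$ positive-definite cone, due to Guillot--Gupta--Vishwakarma--Yip. Given any $B\in LPM_s^{\F_q}((1,\dots,1))$, extend $B$ to the block-diagonal matrix $\tilde B:=\mathrm{diag}(B,x_1,\dots,x_{n-s})$, choosing $x_j\in\F_q^\times$ inductively so that $\chi(x_j)=\epsilon_{s+j-1}'\,\epsilon_{s+j}'$ (with the convention $\epsilon_s'=1$); this is possible because $\chi$ attains both values on $\F_q^\times$ in a definite field. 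A direct calculation then shows $\tilde B\in LPM_n^{\F_q}(\epsilon')$, so by hypothesis~(1) we have $f[\tilde B]\in LPM_n^{\F_q}(\epsilon)$. Since $f[-]$ acts entrywise, the top-left $s\times s$ submatrix of $f[\tilde B]$ is precisely $f[B]$, and the constraint $\epsilon_1=\cdots=\epsilon_s=1$ forces $f[B]\in LPM_s^{\F_q}((1,\dots,1))$. Hence $f[-]$ preserves the $s\times s$ positive-definite cone, and the Guillot--Gupta--Vishwakarma--Yip classification yields $f=c\cdot\mathrm{Frob}_p^\ell$ with $c\in\F_q^+$; the equality $\epsilon=\epsilon'$ then follows from the forward direction already proved. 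The main obstacle is this reduction: one must verify the block-diagonal extension really realizes every admissible sign pattern and match the dimensional hypothesis $s\geq 2$ to the threshold in the cited classification.

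For the compatibility ``moreover'' statements, the fundamental identity is that $\mathrm{Frob}_p^\ell$ acts entrywise as a ring homomorphism, hence commutes with matrix multiplication and transposition, giving $\mathrm{Frob}_p^\ell[LNL^T]=\mathrm{Frob}_p^\ell[L]\cdot\mathrm{Frob}_p^\ell[N]\cdot\mathrm{Frob}_p^\ell[L]^T$, with $\mathrm{Frob}_p^\ell[L]\in\mathbf{L}_n^{\F_q^+}$ whenever $L\in\mathbf{L}_n^{\F_q^+}$ (Frobenius preserves squares). Writing $A=LA_\epsilon L^T$ with the Cholesky factor $L$ and applying $c\cdot\mathrm{Frob}_p^\ell[-]$ yields two equivalent regroupings,
\[
c\cdot\mathrm{Frob}_p^\ell[A]=\mathrm{Frob}_p^\ell[L]\cdot\bigl(c\cdot\mathrm{Frob}_p^\ell[A_\epsilon]\bigr)\cdot\mathrm{Frob}_p^\ell[L]^T=\bigl(\sqrt{c}\,\mathrm{Frob}_p^\ell[L]\bigr)\cdot\mathrm{Frob}_p^\ell[A_\epsilon]\cdot\bigl(\sqrt{c}\,\mathrm{Frob}_p^\ell[L]\bigr)^T,
\]
and the uniqueness clause of Theorem~\ref{main-thm-1} identifies the triangular factors on each side with the quantities appearing in the two displayed identity chains. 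The final special case $\Psi_{\D_\epsilon}^{\pm1}\circ\mathrm{Frob}_p=\mathrm{Frob}_p\circ\Psi_{\D_\epsilon}^{\pm1}$ is immediate because $\mathrm{Frob}_p$ fixes $\pm 1$, hence fixes $\D_\epsilon(\pm 1)$, so the reference matrix is unchanged on both sides.
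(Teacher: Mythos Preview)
Your proposal is correct and follows essentially the same route as the paper. The paper cites \cite[Proposition~2.12]{guillot2024positivity} for $(2)\Rightarrow(1)$ where you give the direct character computation, and it packages your block-diagonal extension $\tilde B$ as $A'\oplus\D_{\epsilon''}(\omega_\pm)$ before invoking the same Guillot--Gupta--Vishwakarma--Yip classification; the compatibility part is likewise derived from $\mathrm{Frob}_p[AB]=\mathrm{Frob}_p[A]\,\mathrm{Frob}_p[B]$ together with $\mathrm{Frob}_p[\D_\epsilon]=\D_\epsilon$, exactly as you outline.
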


\begin{remark}[Two main aspects of Theorem~\ref{T:entrywise-definite}]\label{Rem:entrywise-definite}
\ 
\begin{enumerate}[(a)]
\item Theorem~\ref{T:entrywise-definite} establishes the existence of an entrywise transform \( f[-] \) from \( LPM_{n}^{\F_q}(\epsilon') \) to \( LPM_{n}^{\F_q}(\epsilon) \), under the assumption that the sign patterns \( \epsilon' \) and \( \epsilon \) are identical and take the value \(1\) in at least the first two coordinates (the $2$-$by$-$2$ constraints). In this case, the function \( f \) must be a positive scalar multiple of a power of the Frobenius map. This, in turn, implies that \emph{no} entrywise transform can exist between two distinct LPM cones when the common sign pattern begins with two or more entries equal to \(1\).
    
\item Although this places a limitation on the existence of entrywise transforms between different LPM cones, it simultaneously highlights a structural coherence: the permitted transforms between the same LPM cones align naturally with the positive scalar multiples of the automorphisms of the underlying field, as captured by the final identities in Theorem~\ref{T:entrywise-definite}. In particular, it shows that each Cholesky map $\Psi_{\D_{\epsilon}}^{\pm 1}$ commutes with the automorphisms of the definite field $\F_q$ via the entrywise tranforms.
\end{enumerate}   
\end{remark}

We now come to the possibilities over non-definite fields parallel to Theorem~\ref{T:entrywise-definite}. As we show later, the proof of Theorem~\ref{T:entrywise-definite} is an application of a key result from \cite{guillot2024positivity}. Other main results from \cite{guillot2024positivity} similarly yield a parallel statement for non-definite fields. (However, the compatibility question does not arise here as non-definite fields do not admit a Cholesky factorization.) We mention this result for completeness under a $3$-$by$-$3$ and a $2$-$by$-$2$ constraint. Note that a slightly modified Remark~\ref{Rem:entrywise-definite}(a) applies here. 

\begin{theorem}[Entrywise transforms for non-definite fields]\label{T:entrywise-non-definite}
Suppose integers $3\leq s\leq n$ are given, and let $\F_q$ be a finite non-definite field with $q=p^k$, where $p$ is prime. Let $\epsilon,\epsilon'\in \{\pm 1\}\subseteq \R^{n\times n}$ be sign patterns satisfying $\epsilon_1=\dots=\epsilon_s=\epsilon_1'=\dots=\epsilon_s'=1$. Then for a given function $f:\F_q\to \F_q$, the following are equivalent:
\begin{enumerate}
    \item The entrywise transform $f[-]$ sends $LPM_{n}^{\F_q}(\epsilon')$ into $LPM_{n}^{\F_q}(\epsilon)$.
    \item The sign patterns are equal, and $f$ is a positive multiple of a power of the Frobenius:
    \[
    \epsilon'=\epsilon \quad \mbox{and}\quad 
    f\equiv c\cdot \mathrm{Frob}_p^{\ell} \quad \mbox{for some}\quad 0\leq \ell\leq k-1 \mbox{ and }c\in \F_q^+.
    \]
\end{enumerate}

Moreover, if we consider the special class of non-definite fields $\F_q$ of square order, then for any fixed $n\geq 2$ and $s=2$, statements (1) and (2) are equivalent.
\end{theorem}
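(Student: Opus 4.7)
The plan is to mirror the proof of Theorem~\ref{T:entrywise-definite}: the forward direction (2) $\Rightarrow$ (1) should be a direct verification using the Galois-theoretic properties of the Frobenius, while (1) $\Rightarrow$ (2) will be reduced to the non-definite entrywise positivity classification from \cite{guillot2024positivity}.

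For (2) $\Rightarrow$ (1), I would assume $\epsilon=\epsilon'$ and $f=c\cdot\mathrm{Frob}_p^{\ell}$ with $c\in\F_q^+$. For any $A\in LPM_n^{\F_q}(\epsilon)$ and each $1\leq m\leq n$, the $m\times m$ leading principal minor of $f[A]$ equals $c^m\cdot\mathrm{Frob}_p^{\ell}(\det A_{1:m,1:m})$, since entrywise scaling by $c$ multiplies an $m\times m$ determinant by $c^m$ and field automorphisms commute with the determinant polynomial. Then $\chi(c^m)=1$ (as $c\in\F_q^+$) and the Frobenius preserves the subgroup $\F_q^+$, so this minor has character $\epsilon_m$, giving $f[A]\in LPM_n^{\F_q}(\epsilon)$.

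For (1) $\Rightarrow$ (2), the first step would be to extract the form of $f$ by testing on leading $s\times s$ blocks. Given any symmetric $s\times s$ positive definite $B$ over $\F_q$ in the sense of \cite{cooper2024positive}, I would form the block-diagonal matrix $A$ obtained by padding $B$ with a diagonal matrix $\mathrm{diag}(d_{s+1},\dots,d_n)$ whose entries $d_j\in\F_q^\times$ are chosen inductively so that $\chi(\det B\cdot d_{s+1}\cdots d_j)=\epsilon_j'$ for each $j>s$. This is possible because both $\F_q^+$ and $\F_q^-$ are nonempty, and it places $A$ in $LPM_n^{\F_q}(\epsilon')$. Hypothesis (1) then yields $f[A]\in LPM_n^{\F_q}(\epsilon)$, and since the leading $s\times s$ block of $f[A]$ coincides with $f[B]$ while $\epsilon_1=\dots=\epsilon_s=1$, the matrix $f[B]$ must be $s\times s$ positive definite. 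As $B$ varies, $f[-]$ therefore preserves $s\times s$ positive definiteness over the non-definite field $\F_q$; the classification from \cite{guillot2024positivity} (their non-definite $s\geq 3$ result for the main statement, and their sharper $s=2$ classification for square-order non-definite fields, for the ``moreover'' clause) then forces $f\equiv c\cdot\mathrm{Frob}_p^{\ell}$ with $c\in\F_q^+$ and $0\leq\ell\leq k-1$.

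Finally, with this form of $f$ in hand, applying the (2) $\Rightarrow$ (1) computation to any $A\in LPM_n^{\F_q}(\epsilon')$ will show that $f[A]$ has LPM sign pattern $\epsilon'$; comparison with (1) then forces $\epsilon=\epsilon'$. The main obstacle will be matching the hypotheses on $s$ and $q$ precisely to the correct statements from \cite{guillot2024positivity}; aside from that, the argument reduces to determinant computations and the block-diagonal padding that realizes an arbitrary $s\times s$ positive definite matrix as the leading block of an element of $LPM_n^{\F_q}(\epsilon')$.
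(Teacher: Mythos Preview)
Your proposal is correct and follows essentially the same route as the paper: pad an arbitrary $s\times s$ positive definite $B$ block-diagonally to land in $LPM_n^{\F_q}(\epsilon')$, read off that $f[-]$ preserves $s\times s$ positive definiteness, and then invoke the non-definite classification from \cite{guillot2024positivity} (Theorem~C for $s\geq 3$, and the square-order refinement for $s=2$). The only cosmetic difference is that the paper packages both the $(2)\Rightarrow(1)$ direction and the deduction $\epsilon=\epsilon'$ into a single citation of \cite[Proposition~2.12]{guillot2024positivity}, whereas you write out the determinant computation directly.
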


We conclude this subsection with a question for future investigation:

\begin{question}
What are the corresponding entrywise transforms in Theorems~\ref{T:entrywise-definite} and \ref{T:entrywise-non-definite} when the $3$-$by$-$3$ and $2$-$by$-$2$ conditions, i.e., ``\( \epsilon_1 = \dots = \epsilon_s = \epsilon_1' = \dots = \epsilon_s' = 1 \)'', are removed? And how do those transforms behave with the Cholesky factorization, whenever applicable?
\end{question}

\subsection{Two group structures over the LPM cones and their compatibility with the Frobenius map}\label{subsec:group}

One of the key differences between positive definite matrices over the real/complex fields and those over a finite field is that, in the former setting, the square of a positive definite matrix \( A \), namely \( A^2 \), is again positive definite. In contrast, this property does not necessarily hold over finite fields~\cite{cooper2024positive}. In the next result, we introduce a group structure on the bigger cone \( LPM_n^{\F_q} \) (the collection of symmetric \( n \times n \) matrices over a definite field \( \F_q \) whose leading principal minors are all nonzero) under which the square of each matrix is guaranteed to be positive definite.

Recall that for a definite field \( \F_q \), the cone \( LPM_n^{\F_q} \) is the disjoint union of the sub-cones \( LPM_n^{\F_q}(\epsilon) \), indexed by sign patterns \( \epsilon \). We refer to the Cholesky factorization within each of these sub-cones, taking the representative matrix \( A_{\epsilon} = \D_{\epsilon} := \D_{\epsilon}(\pm 1) \) for \( \omega_{\pm} = \pm 1 \), as defined in~\eqref{Elpmdiag}. With this setup, we now state the group structures:

\begin{utheorem}[Group structures on the bigger LPM cone]\label{T:group-str}

Let \( n \geq 1 \) be an integer, and let \(\F_q\) be a definite finite field. Consider that a group \(({\bf L}_{n}^{\F_q^+}, \circledcirc)\) with identity element \(\I_n\) is given. Define a binary operation \(\boxdot\) on the set \(LPM_n^{\F_q}\) as follows: for any \(A, B \in LPM_n^{\F_q}\), write their unique Cholesky factorizations as
\[
A = L \D_{\epsilon} L^T \in LPM_n^{\F_q}(\epsilon), \quad \mbox{and} \quad 
B = K \D_{\epsilon'} K^T \in LPM_n^{\F_q}(\epsilon'),
\]
where \(\epsilon, \epsilon' \in \{\pm 1\}^n \subseteq \R^n\) and \(L, K \in {\bf L}_{n}^{\F_q^+}\). Then define
\begin{align}\label{eq:box-group}
A \boxdot B := (L \circledcirc K)(\D_{\epsilon} \D_{\epsilon'})(L \circledcirc K)^T.    
\end{align}
This makes \((LPM_n^{\F_q}, \boxdot)\) a group with identity element \(\I_n\), and inverse given by
\[
A = L \D_{\epsilon} L^T \mapsto L^{\circledcirc -1} \D_{\epsilon} (L^{\circledcirc -1})^T.
\]
Finally, for every \(A \in LPM_n^{\F_q}\), the square \(A^{\boxdot 2} := A \boxdot A\) is positive definite by Theorem~\ref{T:CHW}.
\end{utheorem}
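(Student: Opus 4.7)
The plan is to reduce the group axioms for $(LPM_n^{\F_q}, \boxdot)$ to the fact that a direct product of groups is a group. By Theorem~\ref{main-thm-1}(2), over the definite field $\F_q$ the Cholesky factorization provides a bijection
\[
\Phi: LPM_n^{\F_q} \longrightarrow \{\pm 1\}^n \times {\bf L}_{n}^{\F_q^{+}}, \qquad L\,\D_\epsilon\, L^T \longmapsto (\epsilon, L),
\]
where $\epsilon$ is read off the sign pattern of the leading principal minors of $A$ and $L \in {\bf L}_{n}^{\F_q^{+}}$ is then uniquely determined. My goal is to show that, under $\Phi$, the operation $\boxdot$ is precisely the direct product of the componentwise-multiplication group $(\{\pm 1\}^n, \cdot)$ and the given group $({\bf L}_{n}^{\F_q^{+}}, \circledcirc)$; once this is established, every assertion of Theorem~\ref{T:group-str} follows for free.

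The entire verification will hinge on the diagonal identity
\[
\D_\epsilon \, \D_{\epsilon'} = \D_{\epsilon \cdot \epsilon'}, \qquad \epsilon\cdot\epsilon' := (\epsilon_1\epsilon_1', \dots, \epsilon_n\epsilon_n'),
\]
which with the normalization $\omega_\pm = \pm 1$ is an entrywise check on \eqref{Elpmdiag}: the $k$-th diagonal entry of $\D_\epsilon$ is $\epsilon_{k-1}\epsilon_k$ for $k \geq 2$ (and $\epsilon_1$ for $k=1$), so multiplying against the $k$-th diagonal of $\D_{\epsilon'}$ produces $(\epsilon_{k-1}\epsilon_{k-1}')(\epsilon_k\epsilon_k')$, which is the $k$-th diagonal of $\D_{\epsilon \cdot \epsilon'}$. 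In particular $\D_\epsilon \D_\epsilon = \D_{(1,\dots,1)} = \I_n$. Armed with this, the defining formula \eqref{eq:box-group} for $\boxdot$ rewrites as
\[
A \boxdot B \;=\; (L \circledcirc K)\,\D_{\epsilon \cdot \epsilon'}\,(L \circledcirc K)^T,
\]
which lies in $LPM_n^{\F_q}(\epsilon \cdot \epsilon')$ by Theorem~\ref{main-thm-1}(1), since $L \circledcirc K \in {\bf L}_{n}^{\F_q^{+}}$. By the uniqueness clause in Theorem~\ref{main-thm-1}(2), this display \emph{is} the Cholesky presentation of $A \boxdot B$, so $\Phi(A \boxdot B) = (\epsilon \cdot \epsilon', L \circledcirc K)$.

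With $\Phi$ identified as a group isomorphism onto the direct product, closure and associativity transfer automatically; the identity element $\Phi^{-1}((1,\dots,1), \I_n) = \I_n \cdot \D_{(1,\dots,1)} \cdot \I_n^T = \I_n$ is the stated one; and the proposed inverse $L^{\circledcirc -1}\, \D_\epsilon\, (L^{\circledcirc -1})^T$ corresponds to $\Phi^{-1}(\epsilon, L^{\circledcirc -1})$, which inverts $(\epsilon, L)$ in the direct product. Finally, for any $A \in LPM_n^{\F_q}$ with $\Phi(A) = (\epsilon, L)$, the square $A^{\boxdot 2}$ has sign pattern $\epsilon \cdot \epsilon = (1,\dots,1)$, so it lies in the positive definite cone $LPM_n^{\F_q}((1,\dots,1))$ characterized by Theorem~\ref{T:CHW}.

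I do not anticipate a serious obstacle. The only substantive step is the verification of the diagonal identity $\D_\epsilon \D_{\epsilon'} = \D_{\epsilon\cdot\epsilon'}$ under the normalization $\omega_\pm = \pm 1$ -- precisely the choice that forces $\D_\epsilon \D_\epsilon = \I_n$ and thus makes the $\{\pm 1\}^n$-factor into a group. Once this is in hand, the rest is formal bookkeeping through the Cholesky bijection $\Phi$.
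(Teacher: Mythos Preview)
Your proposal is correct and amounts to exactly the ``direct verification'' the paper invokes; the paper's own proof is the single sentence ``It can be shown by direct verification,'' so your argument simply fills this in. Your organization via the bijection $\Phi$ onto the direct product $(\{\pm 1\}^n,\cdot)\times({\bf L}_n^{\F_q^+},\circledcirc)$, together with the diagonal identity $\D_\epsilon\D_{\epsilon'}=\D_{\epsilon\cdot\epsilon'}$, is a clean way to carry out that verification.
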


\begin{proof}
It can be shown by direct verification.
\end{proof}

We present two examples of group structures on \( LPM_n^{\F_q}(\epsilon) \), one abelian and one non-abelian, obtained via the transport described in Theorem~\ref{T:group-str} from the group \({\bf L}_n^{\F_q^+}\).

\begin{example}\label{e:two-groups} Define two binary operations \(\circledcirc_1, \circledcirc_2 : {\bf L}_n^{\F_q^+} \times {\bf L}_n^{\F_q^+} \to {\bf L}_n^{\F_q^+}\) by
\[
L \circledcirc_1 K := LK, \quad \text{and} \quad L \circledcirc_2 K := \floo{L} + \floo{K} + \D(L)\D(K),
\]
where, for \(L = (l_{ij})\), the diagonal and strict lower triangular parts are defined by
\[
\D(L) := \mathrm{diag}(l_{11}, \dots, l_{nn}), \quad \text{and} \quad
\floo{L}_{ij} := \begin{cases}
l_{ij}, & i > j, \\
0, & \text{otherwise}.
\end{cases}
\]
The operation $\circledcirc_2$ was introduced in \cite{Cholesky}. Both operations endow \({\bf L}_n^{\F_q^+}\) with a group structure, with identity element \(\I_n\), and respective inverse maps given by
\[
L \mapsto L^{-1} \quad \text{for } \circledcirc_1, \qquad \text{and} \qquad L \mapsto -\floo{L} + \D(L)^{-1} \quad \text{for } \circledcirc_2.
\]
Note, the transported \(\boxdot_1\) is non-abelian; and \(\boxdot_2\) is abelian. 

Moreover, these transported groups are compatible with the Frobenius map:
\begin{align*}
\mathrm{Frob}_p[A \boxdot_j B] &= \mathrm{Frob}_p[A] \boxdot_j \mathrm{Frob}_p[B] \qquad \forall A,B\in LPM_{n}^{\F_q}, \quad \mbox{for}\quad j=1,2.
\end{align*}
\end{example}

\begin{remark}[Group structures on individual LPM cones]
In addition to the transported group structure $
\boxdot$ on the ``global'' cone $LPM_n^{\F_q}$ for definite fields $\F_q$, one can also define ``internal'' group structures $\circledast$ on each individual cone $LPM_n^{\F_q}(\epsilon)$ by modifying the operation in~\eqref{eq:box-group}:
\begin{align}\label{eq:circledas-group}
A \circledast B := (L \circledcirc K)\D_{\epsilon}(L \circledcirc K)^T    
\end{align}
where \(A, B \in LPM_n^{\F_q}(\epsilon)\) with their unique Cholesky factorizations as
\[
A = L \D_{\epsilon} L^T\quad \mbox{and} \quad
B = K \D_{\epsilon} K^T.
\] This construction endows each individual cone $LPM_n^{\F_q}(\epsilon)$, indexed by $\epsilon\in \{\pm 1\}^n\subseteq \R^n$, with a group structure. In particular, all such cones are mutually isomorphic as groups.
\end{remark}

\begin{remark}[Similar results for TPM cones]
All results (and future directions) presented above in Subsections~\ref{subsec:entrywise} and \ref{subsec:group} for the LPM cones admit direct analogues for the TPM cones as well. Since the necessary modifications are straightforward, we omit the explicit statements and proofs in the interest of brevity.
\end{remark}

\section{Proofs}

\subsection{Proof of Theorem~\ref{main-thm-1}}

Fix two notations hereafter: $[m] := \{ 1, \dots, m \}$ for all integers $m\geq 1$, and $A_{I\times J}$ for the submatrix of a matrix $A$ indexed by row set $I$ and column set $J$.

\begin{proof}[Proof of Theorem~\ref{main-thm-1}]

It is along the lines of {\cite[Theorem~A]{khare2025cholesky}}.

\begin{enumerate}[$(1)$]
    \item For a given $A_{\epsilon}\in LPM_{n}^{\F_q}(\epsilon)$ and a non-singular lower triangular $L\in \F_q^{n\times n}$, we need to show that the minor $\det (LA_{\epsilon}L^T)_{[k]\times [k]}$ has the quadratic character $\epsilon_k$. Using the Cauchy--Binet formula, we compute: 
\begin{align*}
\det (L A_\epsilon L^T)_{[k] \times [k]} = \det( L_{[k] \times [n]}
A_\epsilon (L_{[k] \times [n]})^T)
= \sum_{\substack{J,K \subseteq [n],\\ |J|=|K|=k}} \det(L_{[k]\times J})
\det(A_\epsilon)_{J\times K} \det(L_{[k]\times K})^T.
\end{align*}
Now as $L$ is lower triangular, the submatrices $L_{[k]\times J}$ are singular unless $J = [k]$. So, 
\[
\det (L A_\epsilon L^T)_{[k] \times [k]} = \ (\det L_{[k]\times [k]})^2 \det (A_\epsilon)_{[k]\times [k]}.
\]
Therefore, as $L$ is non-singular, we have the desired outcome.

\item Here $\F_q$ is a definite field or a field of even order. For convenience, consider the ``positive square root'' map \( \sqrt{\cdot} : \F_q^+ \to \F_q^+ \), where $\sqrt{a}$ is the unique $b\in \F_q^+$ such that $b^2=a$. Since $\F_q$ is definite or has even order, this map is a bijection.

The proof is based on two key observations on block matrices, and their products:
\begin{align*}
\begin{pmatrix} K & {\bf 0} \\ {\bf p}^T & s \end{pmatrix}
\begin{pmatrix} B_{\epsilon} & {\bf u} \\ {\bf u}^T & v \end{pmatrix}
\begin{pmatrix} K^T & {\bf p} \\ {\bf 0}^T & s \end{pmatrix}
&= \begin{pmatrix}
K B_{\epsilon} K^T & K (B_{\epsilon} {\bf p} + s {\bf u}) \\
({\bf p}^T B_{\epsilon} + s {\bf u}^T) K^T & {\bf p}^T B_{\epsilon} {\bf p} + v s^2 + 2 s {\bf
u}^T {\bf p}
\end{pmatrix},\nonumber\\
A_\epsilon = \begin{pmatrix} B_{\epsilon} & {\bf u} \\ {\bf u}^T & v \end{pmatrix} \in LPM_{n}^{\F_q}(\epsilon)\quad &\implies \quad B_{\epsilon}\in LPM_{n-1}^{\F_q}((\epsilon_1,\dots,\epsilon_{n-1})),\nonumber    
\end{align*}
where ${\bf u}\in \F_q^{n-1}$ and $v\in \F_q$.

Collectively, they show that for a given $A=\begin{pmatrix} B & {\bf b} \\ {\bf b}^T & c
\end{pmatrix}\in LPM_{n}^{\F_q}(\epsilon)$, to obtain the required solution $L$ for $LA_{\epsilon}L^T = A$, we first need to be able to solve for a unique lower triangular $K\in \F_q^{(n-1)\times (n-1)}$ with positive diagonal entries, such that $B=KB_{\epsilon}K^T$. Assuming this can be solved, we show that ${\bf p}\in \F_q^{n-1}$ and $s\in \F_q^+$ can be solved uniquely from this. Note that if
\begin{align*}
    {\bf b}=K(B_{\epsilon}{\bf p}+s{\bf u}) \quad \mbox{and}\quad c= {\bf p}^T B_{\epsilon} {\bf p} + v s^2 + 2 s
\end{align*}
then, from the first equation, one obtains
\begin{align*}
{\bf p} = B_{\epsilon}^{-1} (K^{-1} {\bf b} - s {\bf u})
\end{align*}
and then, up on its substitution in the second, one gets
\begin{align*}
s^2 = \frac{c - {\bf b}^T B^{-1} {\bf b}}{v - {\bf u}^T B_{\epsilon}^{-1} {\bf u}}.
\end{align*}
Now, recall the theory of Schur complements: $\det \begin{pmatrix} P
& {\bf q} \\ {\bf q}^T & t \end{pmatrix} = (\det P) (t - {\bf q}^T P^{-1}
{\bf q})$ if $P^{-1}$ exists. This gives
\begin{equation}\label{Eqsquare}
s^2 = \frac{\det A}{\det B} \cdot \frac{\det B_{\epsilon}}{\det A_{\epsilon}},
\end{equation}
in which the right side has positive quadratic character as both $A,A_{\epsilon}\in LPM_n^{\F_q}(\epsilon)$. Now take the positive square root to solve for the unique $s\in \F_q^+$. This solves for $s$ and therefore for ${\bf p}$ uniquely -- given a unique lower triangular $K\in \F_{q}^{(n-1)\times (n-1)}$ with positive diagonals.

Thus, all that remains to show is: given $B,B_{\epsilon}\in LPM_{n-1}^{\F_q}((\epsilon_1,\dots,\epsilon_{n-1}))$, can one solve for the lower triangular $K$ with positive diagonals such that $B=KB_{\epsilon} K^T$? Considering the aforementioned algorithmic process, it is sufficient to show this for $n=2$: given $a,a_{\epsilon}\in \F_q^{\pm}$, does there exists a unique $\kappa \in \F_q^+$ such that $\kappa \cdot a_{\epsilon}\cdot \kappa =\kappa^2 a_{\epsilon}=a$? Indeed, since $a/a_{\epsilon}\in \F_q^+$ -- where $\F_q$ is either a definite field or has characteristic $2$ -- it has a unique positive square root, which is our required $\kappa \in \F_q^+$.
\end{enumerate}\smallskip

\noindent Now for the the second half of the proof. \medskip

Since $\nabla A \nabla$ reverses the rows and columns of a square matrix, it interchanges the leading and trailing $k \times k$ principal minors for every $k$. Therefore it is the required linear bijection.\smallskip

For the inverse map, recall Jacobi's complementary minor formula~\cite{Jacobi}. Let \( A \in \F_q^{n \times n} \) be invertible, and let \( J, K \subseteq [n]\) of equal size \(0 < p < n\). Then:
\begin{equation}\label{Ejacobi}
\det A \cdot \det (A^{-1})_{K^c \times J^c} = (-1)^{\sum_{J}j+\sum_{K}k} \det A_{J \times K},
\end{equation}
where \( J^c := [n] \setminus J \), and similarly \( K^c \). Apply this identity for \( J = K = [k] \) for \( 0 < k < n \). If \( A \in LPM_n^{\F_q}(\epsilon) \), then the trailing principal minors of \( A^{-1} \) satisfy:
\[
\det (A^{-1})_{[k]^c \times [k]^c} = \frac{\det A_{[k] \times [k]}}{\det A}.
\]
Therefore, the quadratic character of this ratio, and hence of the trailing principal minor, is \( \epsilon_k \epsilon_n \). If \( k = n \), we have \( \det A^{-1} = 1 / \det A \), which has the character \( \epsilon_n \).

Moreover, starting with \( A \in TPM_n^{\F_q}(\epsilon') \), one may apply~\eqref{Ejacobi} with \( J = K = [k]^c \) instead, ensuring that the map is a bijection. 

The final steps of showing $(1)$ and $(2)$ for the TPM cones are a direct applications of the assertions proved above for the linear and non-linear bijections.
\end{proof}

\begin{remark}
The only step in the Proof of Theorem~\ref{main-thm-1}(2) that does not work out for non-definite fields is the last paragraph of (2). More precisely, if $\F_q$ is non-definite, then for each $a\in \F_q^{+}$, there exist distinct $b_1,b_2\in \F_q^{\pm}$ such that $b_j^2=a$. Therefore, the ``positive square root'' function is not well-defined here, and as a result a Cholesky factorization does not exist for non-definite fields.
\end{remark}

\subsection{Proof of Theorems~\ref{T:entrywise-definite} and \ref{T:entrywise-non-definite}}

Here the proofs are applications of the results in \cite{guillot2024positivity} in the context of positivity preservers.

\begin{proof}[Proof of Theorem~\ref{T:entrywise-definite} $(1)\iff (2)$] The implication $(2) \implies (1)$ follows from {\cite[Proposition~2.12]{guillot2024positivity}}. For the other implication, suppose $A'\in LPM_{s}^{\F_q}((1,1,\dots,1))$. Then $A:=A'\oplus \D_{\epsilon''}(\omega_{\pm})\in LPM_n^{\F_q}(\epsilon')$, for some $\omega_{\pm}\in \F_q^{\pm}$, where $\epsilon''=(\epsilon_{s+1},\dots,\epsilon_n)$. Therefore $f[A]\in LPM_{n}^{\F_q}(\epsilon)$, and in particular, $f[A']=f[A]_{[s]\times [s]}\in LPM_{s}^{\F_q}((1,1,\dots,1))$. This means that $f[-]$ is an entrywise positivity preserver over $s\times s$ positive definite matrices, where $\F_q$ is a finite definite field. Therefore {\cite[Theorem~B]{guillot2024positivity}} implies that $f\equiv c\cdot \mathrm{Frob}^{\ell}$ for some $c\in \F_q^+$ and $\ell\in 0,1,\dots,k-1$. Moreover, it follows from {\cite[Proposition~2.12]{guillot2024positivity}} that $\epsilon'=\epsilon$.
\end{proof}

The proof of Theorem~\ref{T:entrywise-non-definite} is similarly completed, using {\cite[Theorem C]{guillot2024positivity}} instead.

The final identities in Theorem~\ref{T:entrywise-definite} and the final assertion in Example~\ref{e:two-groups} require the next:

\begin{lemma}\label{lemma:frob-usual-prod}
Let $\F_q$ be a finite field with $q=p^k$ for $p$ a prime. Suppose $A$ and $B$ are matrices with entries in $\F_q$, such that $AB$ is well defined. Then $\mathrm{Frob}_p[AB]=\mathrm{Frob}_p[A]\mathrm{Frob}_p[B]$.
\end{lemma}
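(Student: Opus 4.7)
The plan is to reduce the claim to the fact that the Frobenius map $x\mapsto x^p$ is a ring homomorphism on $\F_q$, and then apply this identity entrywise to the summation formula that defines matrix multiplication. First I would recall that in characteristic $p$, the $p$-th power map satisfies $(a+b)^p = a^p + b^p$ (the ``freshman's dream,'' since all intermediate binomial coefficients are divisible by $p$) and $(ab)^p = a^p b^p$. By induction on the number of summands, this additivity extends to arbitrary finite sums: $\mathrm{Frob}_p\bigl(\sum_k x_k\bigr) = \sum_k \mathrm{Frob}_p(x_k)$.

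Next, I would write out the entries of $AB$ using the usual summation formula, apply Frobenius, and move it inside the sum and the product using the two properties above. Concretely, for $A\in \F_q^{m\times n}$ and $B\in \F_q^{n\times r}$ and any indices $i,j$:
\[
\mathrm{Frob}_p\bigl((AB)_{ij}\bigr) = \mathrm{Frob}_p\!\left(\sum_{k} a_{ik} b_{kj}\right) = \sum_{k} \mathrm{Frob}_p(a_{ik})\,\mathrm{Frob}_p(b_{kj}) = \bigl(\mathrm{Frob}_p[A]\,\mathrm{Frob}_p[B]\bigr)_{ij}.
\]
Since this holds for every entry, the matrix identity $\mathrm{Frob}_p[AB]=\mathrm{Frob}_p[A]\mathrm{Frob}_p[B]$ follows. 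There is no real obstacle here; the lemma is essentially a bookkeeping consequence of the ring-homomorphism property of Frobenius, and the only ``work'' is the verification that Frobenius commutes with finite sums in characteristic $p$, which is standard.
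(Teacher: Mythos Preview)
Your proof is correct and follows essentially the same approach as the paper: both compute the $(i,j)$ entry of $AB$, apply $\mathrm{Frob}_p$ to the summation formula, and push it through the sum and product using the fact that $\mathrm{Frob}_p$ is a ring homomorphism on $\F_q$. The only difference is that you spell out the ``freshman's dream'' justification, whereas the paper simply invokes that $\mathrm{Frob}_p$ preserves the field operations.
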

\begin{proof}
We have the following as $\mathrm{Frob}_p$ preserves the field operations: 
\begin{align*}
    \mathrm{Frob}_p((AB)_{ij}) = \mathrm{Frob}_p(\sum_{k}a_{ik}b_{kj}) = \sum_{k}\mathrm{Frob}_p(a_{ik})\mathrm{Frob}_p(b_{kj}) = (\mathrm{Frob}_p[A]\mathrm{Frob}_p[B])_{ij}.
\end{align*}
This completes the proof.
\end{proof}

\begin{proof}[Proof of Theorem~\ref{T:entrywise-definite} (compatibility with the Frobenius map)] It follows precisely due to Lemma~\ref{lemma:frob-usual-prod} and the fact that $\Psi_{A_{\epsilon}}^{\pm 1}$ are bijections. Moreover, when we restrict to $A_{\epsilon}=\D_{\epsilon}$, then $\mathrm{Frob}_{p}[\D_{\epsilon}]= \D_{\epsilon}$. This yields the final identities on commutativity.
\end{proof}

\section*{Acknowledgments}

I am thankful for the support by the Centre de recherches math\'ematiques and Universit\'e Laval and their CRM-Laval Postdoctoral Fellowship, and the Alliance grant. I would also like to sincerely thank Apoorva Khare for comments and discussions.



\end{document}